\newtheorem{theorem}{Theorem}[section]
\newtheorem*{mainthm}{Main Theorem}
\newtheorem{proposition}[theorem]{Proposition} 
\newtheorem{corollary}[theorem]{Corollary} 
\newtheorem{lemma}[theorem]{Lemma} 
\theoremstyle{definition}
\newtheorem{definition}[theorem]{Definition} 
\newtheorem*{acknowledge}{Acknowledgements}
\newtheorem{example}[theorem]{Example}
\def\ccc{\mathbb{C}}
\def\zzz{\mathbb{Z}}
\def\rrr{\mathbb{R}}
\def\nnn{\mathbb{N}}
\def\co{\colon\thinspace}
\def\ooo{\mathcal{O}}
\def\od{(\mathcal{O},d)}
\def\uod{E_{\ooo,d}}
\def\uodch{\check{E}_{\ooo,d}}
\def\bod{B_{\ooo,d}}
\def\bodch{\check{B}_{\ooo,d}}
\def\cozb{C^{\ooo}(Z,B_{\ooo,d})}
\def\dist{\mathrm{dist}}
\begin{document}

\title[$G$--actions with close orbit spaces]{$G$--actions with close orbit spaces}

\author[Harvey]{John Harvey}
\address{University of Notre Dame, Department of Mathematics, Notre Dame, Ind. 46556, U.S.A.}
\email{jharvey2@nd.edu}

\subjclass[2010]{54H15} 
\date{\today}

\begin{abstract}
The classification of $G$--spaces by Palais is refined for the case where the orbit space satisfies certain mild topological hypotheses.

It is shown that when a sequence of such orbit spaces is ``close" to a limit orbit space, in some suitable sense, within a larger ambient orbit space, the $G$--spaces in the tail of the sequence are strongly equivalent to the limit $G$--space.
\end{abstract}
\maketitle

\section{Introduction}

In 1960 Palais described the classification of $G$--spaces for compact Lie groups $G$. Up to equivalence, $G$--spaces with finitely many orbit-types over a given orbit space are classified by the path-components of a particular function space. 

This result is a generalization of the classification of principal $G$--bundles over a base $Z$, which is by the path-components of $C(Z,BG)$. However, the function space in Palais' classification is more complicated. In a further generalization by Ageev \cite{Ageev}, it was shown that the finiteness condition on orbit-types can be removed.

In Section \ref{s:refinement} of this paper it is shown that, still assuming the $G$--space to have only finitely many orbit-types, and assuming further that the orbit space is compact metrizable and that the embedding of each subset of the orbit space corresponding to a given orbit type is ``reverse tame''  this function space is, in fact, locally path-connected. In consequence, the $G$--spaces are classified by the connected components of the function space.

The refinement is then applied in Corollary \ref{c:closeorbitspaces} to obtain a Covering Sequence Theorem. This shows that when a sequence of invariant subspaces of an ambient $G$--space $X$ have orbit spaces which are ``close" in some sense to a fixed orbit space inside $X/G$ and satisfy this tameness requirement, then the $G$--spaces in the tail of the sequence are pairwise equivalent. 

In later work \cite{hequi}, this result is shown to be important in understanding equivariant convergence of metric spaces in the Gromov--Hausdorff sense.

\begin{acknowledge}This research was carried out as part of the author's dissertation project, under the ever-helpful advice of Karsten Grove. The author was supported in part by a grant from the U.S. National Science Foundation.\end{acknowledge}

\section{The Classification of $G$--spaces}

This section recalls Palais' original work \cite{palais}.

Assume that $G$ is a compact Lie group. Let $G$ act on $X$ by homeomorphisms. If $H$ is the isotropy of a point $p \in X$, then all conjugates of $H$ appear as isotropy groups in the orbit $G(p)$. Denote the set of all conjugates of $H$ by $(H)$. The orbit $G(p)$ is then said to be an orbit of type $(H)$. 

The union of all orbits of type $(H)$ in $X$ will be denoted by $X_{(H)}$. Its image in the orbit space $X/G$ will be denoted by $(X/G)_{(H)}$.

\subsection{$\ooo$--spaces}

The analogy to the base of a principal bundle is the orbit space of a group action. However, the base of a bundle is just a topological space, while an orbit space contains other data. An $\ooo$--space should be thought of as an orbit space which has forgotten its group action, but still retains these data.

\begin{definition}Let $G$ be a compact Lie group, and let $\mathcal{O}$ be a collection of $G$--orbit-types. Then an \emph{$\ooo$--space} is defined to be a locally compact, second countable space $X$ together with a partition $\{ X_{(H)}\}_{(H) \in \ooo}$ of $X$ indexed by $\ooo$ such that, for each $(H) \in \ooo$, $\cup \{X_{(K)} \vline (K) \leq (H)\}$ is open.\end{definition}

\begin{definition}An \emph{$\ooo$--map} between two $\ooo$--spaces $X$ and $Y$ is a continuous map $f\co  X \to Y$ such that $f(X_{(H)}) \subset Y_{(H)}$ for all $(H) \in \ooo$.\end{definition}

These definitions give a category having all $\ooo$--spaces for objects and $\ooo$--maps for morphisms. The isomorphisms in this category
will be called \emph{$\ooo$--equivalences}. If $f_0\co  X \to Y$ is an $\ooo$--map, an $\ooo$--homotopy of $f_0$ is an $\ooo$--map $f\co X \times I \to Y$ such that $f(x,0)=f_0(x)$. Writing $f_t(x)$ for $f(x,t)$, $f_0$ and $f_1$ are said to be \emph{strongly $\ooo$--homotopic}. Say that two $\ooo$--maps $f_0$ and $f_1$ from $X$ to $Y$ are \emph{weakly $\ooo$--homotopic} if there is an $\ooo$--equivalence $h$ of $X$ with itself such that $f_0$ and $f_1 \circ h$ are $\ooo$--homotopic.

Let $C^{\mathcal{O}}(X,Y)$ be the set of $\ooo$--maps from $X$ to $Y$ endowed with the compact-open topology. Let $[X,Y]^{\mathcal{O}}$ be the set of strong $\ooo$-homotopy classes of these maps -- these are the path-components of $C^{\mathcal{O}}(X,Y)$. The group of $\ooo$--equivalences of $X$ with itself acts on $[X,Y]^{\mathcal{O}}$ and the orbit space is the set of weak $\ooo$--homotopy classes of maps.

Some control on the dimension of the orbit space is also needed for the classification. Let $\ooo$ be a finite set of $G$--orbit-types. An $\ooo$--dimension function is a function $d\co  \ooo \to \{ -1, 0, 1, \ldots \}$. If an $\ooo$--space $X$ satisfies $\dim(X_{(H)}) \leq d((H))$ for each $(H) \in \ooo$, then $X$ will be called an $\od$--space.

\subsection{$G$--spaces}

A $G$--space will be a locally compact, second countable space acted on by $G$ by homeomorphisms. 

\begin{example}
Let $X$ be a $G$--space, and let $\ooo$ be the set of orbit-types of the $G$--action. Then $X/G$ is an $\ooo$ space.
\end{example}

Given an $\ooo$--space $Z$, a $G$--space over $Z$ is a triple $(X, Z, h)$ where $X$ is a $G$--space and $h\co  X/G \to Z$ is an $\ooo$--equivalence. If $(X', Z, h')$ is another $G$--space over $Z$, and $f\co  X \to X'$ is a $G$--equivariant homeomorphism, $f$ is called a \emph{weak equivalence} of $G$--spaces over $Z$. The map $f$ induces a map $\bar{f}\co X/G \to X'/G$, and in the case that $h' \circ \bar{f} \circ h^{-1}$ is the identity map on $Z$ $f$ is called a \emph{strong equivalence}.

Given a $G$--space over an $\ooo$--space $Y$ and a second $\ooo$--space $X$, any map in $C^{\mathcal{O}}(X,Y)$ induces a unique $G$--space over $X$ by pull-back. Strongly homotopic maps induce strongly equivalent $G$--spaces, while weakly homotopic maps induce weakly equivalent $G$--spaces.

\subsection{Palais' construction}

In analogy to universal bundles, Palais constructs universal $G$--spaces. These spaces are reduced joins. 

The \emph{join} of several $G$--spaces $X_1, X_2, \ldots , X_l$ is denoted by $X_1 \circ X_2 \circ \ldots \circ X_l$. It is obtained from the product $X_1 \times [0,1] \times X_2 \times [0,1] \times \ldots \times X_l \times [0,1]$ by identifying $$((x_1,t_1),(x_2,t_2),\ldots , (x_l,t_l)) \sim ((y_1,s_1),(y_2,s_2),\ldots , (y_l,s_l))$$ when $t_i=s_i$ for all $i = 1, \ldots l$ and $x_i = y_i$ if $t_i \neq 0$. A natural $G$--action on the join is given by letting $G$ act trivially on the interval factors.

The \emph{reduced join}, $X_1 * X_2 * \ldots * X_l$ is a subset of the full join, and is obtained by requiring that if a point
$$p  = ((p_1,t_1),(p_2,t_2),\ldots(p_l,t_l)) \in X_1 \circ X_2 \circ \ldots \circ X_l$$
has $G_p$ as its isotropy group then there must be some $i$ such that $t_i \neq 0$ and $p_i \in X_i$ has the same isotropy group $G_p$. Points not satisfying this condition are removed to give $X_1 * X_2 * \ldots * X_l$.

Consider a finite collection of $G$--orbit-types $\ooo = \{ (H_1), \ldots , (H_r) \}$ and a dimension function $d$. Let $N(H_i)$ be the normalizer of $H_i$ in $G$, and suppose $N(H_i)/H_i$ is $m_i$--connected. Then for any 
$$
k_i \geq \frac{d((H_i)) + 2}{m_i + 2}
$$
define
$$
\uodch = (G/H_1)^{\circ k_1} \circ \cdots (G/H_i)^{\circ k_i} \circ \cdots (G/H_r)^{\circ k_r}
$$
and let $\uod$ be the corresponding reduced join. 

The $G$--space $\uod$ is a universal $\od$--space. The reader may find more information on the properties of a ``universal" $G$--space in \cite{palais}, but its orbit space will be more relevant for this paper.

Let $\bodch$ be $\check{E}_{\ooo,d+1} / G$, and let $\bod$ be $E_{\ooo,d+1}/G$. Now the original classifying result of Palais can be precisely stated.

\begin{theorem}\cite{palais}
$B_{\ooo,d}$ is $(\ooo,d)$--classifying, meaning that the set of strong equivalence classes of $G$--spaces over an $(\ooo,d)$--space $Z$ is in one-to-one correspondence with the set $[Z,B_{\ooo,d}]^{\mathcal{O}}$.
\end{theorem}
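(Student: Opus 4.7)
The plan is to follow the classical scheme for classifying spaces, adapted equivariantly. In one direction I would define
$$
\Phi \co [Z,\bod]^{\ooo} \longrightarrow \{\text{strong equivalence classes of $G$--spaces over $Z$}\}
$$
by sending $[f]$ to the pull-back of the universal $G$--space $\uod$ along a representative. The excerpt already records that strongly $\ooo$--homotopic maps pull back to strongly equivalent $G$--spaces, so $\Phi$ is well-defined, and pull-back along any $\ooo$--map produces a $G$--space over $Z$, so $\Phi$ has the correct target. Showing $\Phi$ is a bijection amounts to existence and uniqueness statements for ``classifying $\ooo$--maps'' of a given $G$--space $(X,Z,h)$.

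For surjectivity I would construct an equivariant map $\tilde f \co X \to \uod$ and pass to the quotient to obtain an $\ooo$--map $f \co Z \to \bod$ with $f^* \uod$ strongly equivalent to $X$ over $Z$. The natural approach is induction over the orbit-type stratification of $X$, ordered so that at each stage one has mapped an open invariant subspace and must extend across a closed invariant subspace. On the $(H_i)$--stratum one needs to map equivariantly into the join factor $(G/H_i)^{\circ k_i}$; via standard equivariant machinery (passing to $H_i$--fixed points and then to the $N(H_i)/H_i$--quotient), such maps from $X_{(H_i)}$ are classified by maps from $X_{(H_i)}/G$, of dimension at most $d((H_i))$, into a target whose connectivity is determined by the $m_i$--connectivity of $N(H_i)/H_i$. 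The inequality
$$
k_i \geq \frac{d((H_i))+2}{m_i+2}
$$
is calibrated to make this target sufficiently connected that ordinary obstruction theory produces both the stratum-wise map and the stratum-wise homotopies needed for uniqueness. Injectivity of $\Phi$ then follows by applying the same machinery to $X \times I$ over $Z \times I$, with boundary data given by any two $\ooo$--maps whose pull-backs are strongly equivalent.

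The main obstacle, and the reason for the intricate reduced-join construction, is compatibility along the closures where strata meet: a point in one stratum may be approached by points of different orbit-type, and the equivariant map must transition continuously through these degenerations. The reduced-join condition --- that a point with isotropy $G_p$ must have some join coordinate $t_i \neq 0$ whose $G/H_i$--entry also has isotropy $G_p$ --- is the precise mechanism that forces ``foreign'' join coordinates to vanish on approach to a higher-isotropy stratum, converting the stratum-wise construction above into a continuous global one. The inductive step is therefore an \emph{extension} problem at each stage rather than merely an existence problem, but the connectivity bound above is uniform enough to make the relative obstruction groups vanish.
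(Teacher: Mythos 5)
This theorem is quoted from Palais and the paper supplies no proof of its own (the section explicitly only ``recalls'' \cite{palais}), so there is nothing internal to compare against; your outline is essentially Palais' actual strategy --- classify by pull-back of $\uod$, build the classifying map inductively over the orbit-type strata by reducing equivariant maps on $X_{(H_i)}$ to $N(H_i)/H_i$--data over $(X/G)_{(H_i)}$ and using the connectivity bound $k_i \geq \frac{d((H_i))+2}{m_i+2}$ to kill the obstructions, with the reduced-join condition providing continuity across stratum closures and injectivity handled by running the same argument on $X \times I$ over $Z \times I$. The one point your sketch should make explicit is that the $X \times I$ step raises each stratum's dimension by one, which is precisely why the paper sets $\bod = E_{\ooo,d+1}/G$ rather than $E_{\ooo,d}/G$: the uniqueness half of your argument needs that extra unit of connectivity, and would fail if the classifying space were built from $E_{\ooo,d}$.
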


The set of weak equivalence classes of $G$--spaces is obtained by allowing the self-equivalences of $Z$ to act on $[Z,B_{\ooo,d}]^{\mathcal{O}}$.

\section{Examples I}

This section provides some simple examples of how the classification theory works for actions of the circle.

\begin{example}
	Let $G=S^1$ and let $\ooo = \{ S^1, 1 \}$, in order to classify the actions of the circle without finite isotropy. Then $\bod = K \ccc P^n = (\ccc P^n \times [0,1]) / (\ccc P^n \times \{0\})$, where $n$ depends on $d(S^1)$. Let $Z$ be a conical orbit space, $KY = (Y \times [0,1]) / (Y \times \{0\})$ such that $Z_{S^1}$ is the base of the cone, $Y \times {1}$, and $Z_{\{1\}}$ is the complement.
	
	Specify a point $p \in \ccc P^n$. Any $\ooo$--map $f \co  Z \to K \ccc P^n$ may be written $f(y,r)=(g(y,r),h(y,r)) \in \ccc P^n \times [0,1]$ where $h(y,r) = 0 \iff r=1$.
	When $r=1$ the value of $g$ is not relevant. Indeed, it may not be possible to choose $g$ to be continuous at the base of the cone.
	One may easily homotope $f$ so that $h(y,r)=1-r$, and such an $\ooo$--map may be said to be in ``standard form". Now it may be homotoped further by deforming $g$ so that $g_t (y,r) = g(y,(1-t)r) $. When $t=1$, the resulting map sends the entire circle $Y \times \{r\}$ to the point $(f(o),1-r)$, where $o$ is the vertex of $Z$. This map can be homotoped once more so that $Y \times \{r\}$ is mapped to $(p,1-r)$, and it is now clear that all $\ooo$--maps between the spaces are homotopic.
	
	Therefore, up to strong equivalence, for each conical orbit space $Z$ with fixed points on the base and principal orbits in the complement, there is only one $S^1$--space. If $Z=D^2$, for example, it can only arise from the unique linear action on $S^3$ which fixes a circle and is free elsewhere.
\end{example}

\begin{example}\label{e:product}
	Again, let $G=S^1$ and $\ooo = \{ S^1, 1 \}$ so that $\bod = K \ccc P^n = (\ccc P^n \times [0,1]) / (\ccc P^n \times \{0\})$, where $n$ depends on $d(S^1)$. Let $Z$ be a product, $Z = Y \times [-1,1]$ such that $Z_{S^1} = Y \times \{-1,1\}$ and $Z_{\{1\}} = Y \times (-1,1)$.
	
	Once again, any $\ooo$--map $f \co  Z \to K \ccc P^n$ may be written $f(y,r)=(g(y,r),h(y,r)) \in \ccc P^n \times [0,1]$ where $h(y,r) = 0 \iff r=-1 \textrm{ or }1$. One may easily homotope $f$ to a ``standard form", here meaning that $h(y,r)=1-|r|$. In fact, given any $\ooo$--homotopy $F\co Z \times I \to K \ccc P^n$, it can be deformed to a homotopy $\tilde{F}$ such that each map $f_t = \tilde{F}(\cdot,t)$ is in standard form. By restricting attention to such homotopies, it is clear that $[Z,K \ccc P^n]^{\mathcal{O}} \leftrightarrow [Y,\ccc P^n]$.
	
	For example, if $Y=S^2$, the $S^1$--spaces are classified up to strong equivalence by elements of $\pi_2(\ccc P^2) \cong \zzz$. To a given integer $p$ corresponds a double mapping cylinder where both maps are given by  the orbit space projection of the free circle action on the lens space $L_{p,1} \to S^2$. Here $L_{0,1}$ represents $S^2 \times S^1$ and $L_{1,1}$ represents $S^3$. The spaces given by $p$ and $-p$ are weakly equivalent---the maps are weakly homotopic via an orientation-reversing automorphism of the orbit space $S^2 \times [-1,1]$.
\end{example}

\section{Refinement of the classification}\label{s:refinement}

The Main Theorem of this work is the refinement of Palais' classification, showing that strong equivalence classes are in bijection with the connected components of $\cozb$. This is equivalent to showing that the function space $\cozb$ is locally path-connected.

In this section the appropriate properties of both the codomain, $\bod$, and the domain, $Z$, will be investigated, before the result is proved.

\subsection{Properties of the codomain}

Such connectedness statements are usually dependent primarily on the codomain of the function space.
In this case, the codomain $\bod$ has a very nice geometric structure.

The space $\uodch$ is a join of smooth homogeneous manifolds.
If each of these manifolds is given a $G$--invariant Riemannian metric, $\uodch$ has a natural metric given by the spherical join of the manifolds.
This metric is $G$--invariant, and so it induces a metric on $\bod$.
Assume from now on that $\bod$ is endowed with such a metric.

\begin{lemma}
	The space $\bodch$ is a Whitney stratified space, and $\bod$ is a stratified subspace.
\end{lemma}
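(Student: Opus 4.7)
The plan is to build the Whitney stratification of $\bodch$ out of the join structure of $\uodchprime := \check{E}_{\ooo,d+1}$ together with the $G$-action, and then to verify that the reduced-join condition cuts out a union of strata.

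First, I would decompose $\uodchprime$ according to its join structure. Labelling the join factors by the pairs $(i,j)$ with $1 \le i \le r$ and $1 \le j \le k_i$, each point is given by coordinates $(p_{ij},t_{ij})$ with $\sum t_{ij}^2 = 1$ (using the spherical join realisation coming from the $G$--invariant Riemannian metric on each $G/H_i$). For each nonempty index set $S$, the locus where exactly the coordinates $(i,j) \in S$ satisfy $t_{ij} > 0$ is an open smooth submanifold, diffeomorphic to $\prod_{(i,j) \in S}(G/H_i)\times\mathring{\Delta}^{|S|-1}$. This is the standard stratification of an iterated join of smooth manifolds, and the Whitney (a) and (b) conditions at the face incidences follow from the local product structure of the spherical join near a face (each face has a neighbourhood modelled on a cone on a lower join, and cones on Whitney stratified spaces are Whitney stratified).

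Second, I would refine this decomposition by orbit type. On a fixed join--stratum indexed by $S$, the $G$--action is smooth and the isotropy is locally constant on the orbit-type pieces; the decomposition of a smooth $G$--manifold by orbit type is a $G$--invariant Whitney stratification, and combining these with the join strata keeps the refinement Whitney. The resulting strata of $\uodchprime$ are $G$--invariant smooth manifolds on which $G$ acts with constant isotropy type.

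Third, I would pass to the quotient. On each refined stratum the orbit map is a smooth fibre bundle with homogeneous fibre, so its image in $\bodch$ is a smooth manifold, and the Whitney conditions descend because the orbit map restricted to each stratum is a smooth submersion. This produces a Whitney stratification of $\bodch$.

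Finally, observe that the reduced--join condition, requiring some $(i,j) \in S$ with $G_{p_{ij}} = G_p = \bigcap_{(k,\ell)\in S} G_{p_{k\ell}}$, depends only on $S$ and on the common orbit type along the stratum. Hence $\uod$ is a union of strata of $\uodchprime$, and $\bod$ is a union of strata of $\bodch$, which is the statement of the lemma. The step I expect to be the main obstacle is the verification of Whitney's condition (b) at the collapsed faces of the join, where one or more $t_{ij}$ limit to zero; here one has to exploit the spherical-join/cone local model to show that secants and tangent planes converge compatibly, rather than relying on a purely product argument.
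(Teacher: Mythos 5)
Your proposal is correct and follows essentially the same route as the paper: stratify the full join by the face (join) structure, refine by orbit type, descend to the quotient, and observe that the reduced-join condition removes a union of (components of) strata. The paper's own proof is just a terser sketch of exactly this argument, so your additional detail on the Whitney conditions at collapsed faces is a welcome elaboration rather than a divergence.
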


\begin{proof}
	The full join $\check{E}_{\ooo,d+1}$ is a Whitney stratified space, with the strata given by the subspaces $G/H_i$ and their joins. 
	The group $G$ preserves the strata, so that the quotient $\bodch$ is also Whitney stratified, the strata given by the images of the strata of $\check{E}_{\ooo,d+1}$ as well as by orbit type.
	
	To obtain $\bod$ from $\bodch$, orbits of type $(L)$ are removed if they are in the image of the subjoin of all $(G/H_i)$ with $(H_i) \neq (L)$.
	The removed orbits therefore are a union of connected components of strata, and so what remains, $\bod$, is a stratified subspace.
\end{proof}

Every orbit type in $\bod$ is embedded with a tubular neighborhood as follows. 
Let $(K) \in \ooo$. 
Then the preimage in $\uodch$ of $B_{(K)}$ is contained in the join of all $G/H$ such that $(K) \leq (H)$, and it is open in that subset after passing to the reduced join. 
Normal to this in $\uodch$ is the cone on the join of all $G/H$ such that $(K) \nleq (H)$. 
Let $\Sigma$ be the corresponding reduced join. 
Then there is an open neighborhood $T_{(K)}$ of $B_{(K)} \subset \bod$ which is a fiber bundle over $B_{(K)}$, with fiber given by the cone on $\Sigma / K$. 
Let $p_{(K)} \co T_{(K)} \to B_{(K)}$ be the projection of the fiber bundle. 
For any $\epsilon > 0$, it is possible to choose $T_{(K)}$ so that the diameter of the fibers of $p_{(K)}$ is at most $\epsilon$, and so that the projection $p_{(K)}$ is a short map.

Note that for each $(K) \in \ooo$, $B_{(K)} \subset \bod$ is an absolute neighborhood retract (ANR). 
Suppose that $f \co X \to Y$ is a map from a space $X$ into a metric ANR $Y$, and that $f(X) \subset K \subset Y$ for some compact $K$. 
Then if $g$ is another map sufficiently close to $f$, $g$ and $f$ must be homotopic. 
This can be seen by embedding $Y$ in a linear space, constructing a linear homotopy from $f$ to $g$, and then retracting the homotopy to $Y$. 
The diameter of the homotopy is clearly controlled.
See, for example, van Mill for a proof \cite[Theorem 4.1.1]{vanmill}.

Furthermore, if a controlled homotopy has already been defined on some closed $A \subset X$, and $X$ is normal, it can be extended in a controlled way. 
This is done by interpolating in a linear fashion between the retract of the linear homotopy and the given homotopy using an Urysohn function. The author could not locate a proof of this fact, but the equivariant version was proved by Antonyan et al \cite{antonyan}.

Some notions relating to homotopies of maps into stratified spaces are needed.

Let $Y$ be a stratified space. A map $f \co X \times A \to Y$ will be called \emph{stratum-preserving} along $A$ if, for each $x \in X$, $f(x \times A)$ lies in a single stratum of $Y$.

A \emph{stratum-preserving homotopy} from $X$ is a map defined on $X \times [0,1]$ which is stratum preserving along $[0,1]$.

Let $X$ and $Y$ be stratified spaces and let $p \co X \to Y$ be a map. Then $p$ is a \emph{stratified fibration} provided that given any space $Z$ and any commuting
diagram
\begin{center}
	\begin{tikzcd}
		Z	\arrow{r}{f} \arrow{d}{\times 0}	& X \arrow{d}{p}	\\
		Z \times [0,1]	\arrow{r}{F}	&	Y
	\end{tikzcd}
\end{center}
where $F$ is a stratum-preserving homotopy, there exists a  stratum-preserving homotopy $\tilde{F} \co Z \times [0,1] \to X$ such that $\tilde{F}(z,0) = f(z)$ for each $z \in Z$ and $p \tilde{F} = F$.

\begin{example}\label{e:stratumbundle}Let $E$ be a stratified space, and let $p \co E \to B$, with $B$ paracompact. If $p$ is a fiber bundle, and if local trivializations of $p$ can be chosen to respect the stratification of $E$, then $p$ is a stratified fibration. This is because the usual proof of the Covering Homotopy Theorem (see Huebsch, for example \cite{huebsch1,huebsch2}) operates in a stratum-preserving way. \end{example}

\subsection{Properties of the domain}

Usually the conditions on the domain are much less stringent.
However, in this case the functions are restricted to those which respect the partitions of the spaces, and so some control is necessary on how the domain is partitioned.
It is sufficient that the partition of the space $Z$ satisfy a certain ``tameness'' condition.
The precise condition, as given below, is best compared with the notion of ``reverse tameness'' used by Quinn \cite{quinn} in his study of homotopically stratified sets.
It is somewhat weaker than Quinn's definition, except for the inclusion of a stratum-preserving requirement.

It will be convenient to move from the partition on $Z$ to the related filtration.
For the purposes of this paper, a filtration may have an index set which is only partially ordered.
For $(H) \in \ooo$, write $Z_{\geq (H)}$ for the union of all $Z_{(K)}$ such that $(K) \geq (H)$. 
$Z_{\geq (H)}$ is a closed set.
These sets $Z_{\geq (H)}$ make up a filtration of $Z$ indexed by the partially ordered set $\ooo$, but with the reverse ordering.

\begin{definition}
	Let $X$ be a filtered set (with the filtration indexed by a set which is possibly only partially ordered).
	The filtration is said to be \emph{tame} if for each $Y \subset X$ which is a union of elements of the filtration
	and for each open cover $\mathcal{U}$ of $X$
	there are a neighborhood $V$ of $Y$ 
	and a homotopy $h \co (X - Y) \times I \to X - Y$ satisfying:
	\begin{enumerate}
		\item $h$ is the identity on $(X - Y) \times \left\lbrace 0 \right\rbrace$, 
		\item $h((X - Y) \times \left\lbrace 1\right\rbrace ) \subset X - V$,
		\item $h$ is a $\mathcal{U}$--homotopy, and 
		\item $h$ preserves every member of the filtration on $X - Y$. 
	\end{enumerate}
\end{definition}

When the filtration of an $\od$--space is tame, it will be called a \emph{tame $\od$--space}.

Tameness is a local property. Say that the filtration is locally tame if, for each closed $Y \subset X$ as above, and for each $y \in Y$, there is an open set $U_y$ containing $y$ so that for each open cover $\mathcal{U}$ of $X$ there is a filtration-preserving $\mathcal{U}$--homotopy $r \co (U_y - Y) \times [0,1] \to X-Y$ deforming $U_y - Y$ into $X-V$ for some open $V \supset Y$.

\begin{proposition}\label{p:localtameness}Let $X$ be a compact metrizable space with a filtration. The filtration is tame if and only if it is locally tame.\end{proposition}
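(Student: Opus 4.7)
\noindent\emph{Proof proposal.} The forward direction is immediate: taking $U_y := X$ for every $y \in Y$, any tameness homotopy serves as a local-tameness homotopy on $(U_y - Y) \times I = (X - Y) \times I$. For the converse, the plan is to fix a closed $Y \subseteq X$ (a union of filtration members) and an open cover $\mathcal{U}$ of $X$, pick a compatible metric $d$ on $X$ with Lebesgue number $\delta > 0$ for $\mathcal{U}$, and patch local-tameness homotopies into a single global one. Exploiting compactness of $Y$, I would extract a finite subcover $U_1, \ldots, U_n$ of $Y$ from $\{U_y\}_{y \in Y}$, then use normality twice to produce doubly-nested closed shrinkings $C_i^{\sharp} \subseteq \operatorname{int}(C_i) \subseteq C_i \subseteq U_i$ with $\bigcup_i \operatorname{int}(C_i^{\sharp}) \supseteq Y$ and positive buffer widths $d(C_i^{\sharp}, X \setminus C_i) > 0$. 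Urysohn functions $\phi_i \co X \to [0,1]$ with $\phi_i \equiv 1$ on $C_i$ and $\phi_i \equiv 0$ outside $U_i$ complete the setup.

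For a mesh $\mu > 0$ to be chosen small, I would apply local tameness at each $U_i$ with a cover of mesh $\mu$ to produce filtration-preserving homotopies $r_i \co (U_i - Y) \times I \to X - Y$ of diameter less than $\mu$, with $r_i(U_i - Y, 1) \subseteq X - V_i$ for some open $V_i \supseteq Y$. Each $r_i$ extends to $g_i \co (X - Y) \times I \to X - Y$ via $g_i(x, t) := r_i(x, \phi_i(x) t)$ on $U_i - Y$ and $g_i(x, t) := x$ elsewhere; continuity holds because $\phi_i$ vanishes off $U_i$, where $r_i(x, 0) = x$. The concatenation of $g_1, \ldots, g_n$ on the successive subintervals $[(i-1)/n, i/n]$ defines the candidate homotopy $h$.

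Properties (1) and (4) in the definition of tameness are immediate, and (3) follows because each trajectory $t \mapsto h(x, t)$ has total diameter at most $n\mu$, so choosing $n\mu < \delta$ forces each trajectory into a single element of $\mathcal{U}$ via the Lebesgue number. For (2), I would argue by cases on $x$: if $x \in X - Y$ lies outside $\bigcup_i \operatorname{int}(C_i^{\sharp})$, then total displacement $n\mu$ keeps $h(x, 1)$ bounded away from $Y$; if $x \in C_j^{\sharp}$ with $j$ minimal, the buffer condition $n\mu < d(C_j^{\sharp}, X \setminus C_j)$ places the intermediate position $h(x, (j-1)/n)$ inside $C_j$ where $\phi_j \equiv 1$, so $g_j$ applies $r_j$ at full time $1$ and sends that intermediate position into $X - V_j$. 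A common open $V$ inside $\bigcap_j V_j$, chosen to be a metric neighborhood of $Y$ of sufficiently small radius, should then be disjoint from $h(X - Y, 1)$.

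The main obstacle will be the verification of (2): one must simultaneously arrange $\mu < \delta / n$, $\mu$ less than every buffer width, and $\mu$ small enough against the push depths $d(Y, X - V_j)$ produced by local tameness---a constraint that is subtle because those depths themselves depend on the mesh $\mu$ chosen in the application of local tameness. I would handle this via a two-pass selection of parameters: a preliminary coarse application of local tameness pins down baseline $V_j$'s and thereby a scale for $\mu$; a refined application at the resulting mesh then produces the $r_i$ used in the concatenation, with the final $V$ taken inside the coarse-pass neighborhoods so that the residual $(n-j)\mu$ displacements do not breach it.
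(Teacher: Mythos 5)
Your overall architecture (finite cover, nested shrinkings with Urysohn functions, reparametrized local homotopies $g_i(x,t)=r_i(x,\phi_i(x)t)$, concatenation) is essentially the paper's construction, and your verifications of properties (1), (3) and (4) are fine. But the verification of property (2) has a genuine gap, and it is exactly the one you flag yourself. After the $j$-th stage pushes $h(x,(j-1)/n)$ into $X-V_j$, you need the remaining stages not to return the point too close to $Y$; your plan is to compare the residual displacement $(n-j)\mu$ with the ``push depths'' $\mathrm{dist}(Y, X-V_j)$. The two-pass selection does not close this circle: the second (fine-mesh) application of local tameness produces \emph{new} homotopies $r_i$ whose associated neighborhoods $V_j$ are in general strictly smaller than the coarse-pass ones --- a $\mu$-homotopy can only push points a distance less than $\mu$ away from $Y$, so its guaranteed depth shrinks with $\mu$. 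Nothing in the definition of local tameness lets you retain the coarse-pass depths while using the fine-pass homotopies, so the inequality ``residual displacement $<$ depth'' cannot be arranged this way; taking the final $V$ inside the coarse neighborhoods does not help because the fine homotopies are only guaranteed to land in $X-V_j^{\mathrm{fine}}$.

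The resolution is qualitative rather than quantitative, and needs no comparison of $\mu$ with any depth. Once $h(x,(j-1)/n)\in C_j$ and the $j$-th stage lands the point in the set $X-V_j$, observe that $X-V_j$ is a \emph{compact} subset of $X-Y$ (it is closed in the compact space $X$ and disjoint from $Y$). The tail of the concatenation is a continuous map of $(X-V_j)\times[j/n,1]$ into $X-Y$, so its image $K_j$ is again a compact subset of $X-Y$. Hence $h((X-Y)\times\{1\})$ is contained in the compact set $K=\bigcup_{j=1}^{n}K_j\cup K_0$ (where $K_0$ handles your first case of points outside $\bigcup_i \operatorname{int}(C_i^{\sharp})$), $K\subset X-Y$, and $U:=X-K$ is the required open neighborhood of $Y$. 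This is precisely how the paper finishes: every point enters some compact set $X-V$ at some stage, and the subsequent stages can only carry that compact set to another compact subset of $X-Y$. With this substitution your argument goes through; one further minor point is that your $\phi_i$ should have support compactly contained in $U_i$ (not merely vanish outside $U_i$) so that $g_i$ is continuous at $\partial U_i$.
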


\begin{proof}
	Endow $X$ with a metric, and replace the given cover $\mathcal{U}$ with a  cover by $\epsilon$--balls, and aim to construct an $\epsilon$--homotopy.
	
	Cover $X$ with finitely many open sets $U_1, \ldots, U_N$ so that $Y$ is tame in each $U_i$.
	Choose continuous functions $a_i \co X \to [0,1]$ so that the support of each $a_i$ is in $U_i$ and $\Sigma_i a_i = 1$.
	Let $r_i \co (U_i - Y) \times [0,1] \to X-Y$ be an $\frac{\epsilon}{N}$--homotopy deforming $U_i - Y$ into $X-V$ in a stratum-preserving manner for some open $V \supset Y$.
	(Since $N$ is finite, $V$ may be assumed not to depend on $i$.)
	By an appropriate choice of $r_i$, one may assume further that $r_i ((U_i - Y) \times [\frac{1}{N+1} , 1]) \subset X-V$ and that $a_j \circ r_i$ is a $\frac{1}{(N+1)^3}$--homotopy for each $j$.  
	Extend each $r_i$ over $X \times \left\lbrace 0\right\rbrace $ by the identity.
	
	New homotopies $R_i \co (X - Y) \times [0,1] \to X-Y$ can be constructed by $R_i(x,t) = r_i (x, a_i(x)t)$. Write $R^j \co (X - Y) \times [0,1] \to X$ for the homotopy given by concatenating $R_1, \ldots R_j$. It is claimed that $R^N$ is the required deformation.
	
	Certainly since each $r_i$ is stratum-preserving, each $R_i$ is also, and so is each $R^j$. Since each $r_i$ is an $\frac{\epsilon}{N}$--homotopy, $R^N$ is an $\epsilon$--homotopy. It remains only to show that $R^N(X-Y) \times \left\lbrace 1 \right\rbrace) \subset X-U$ for some open neighborhood $U$ of $Y$.
	
	For each $q \in X-Y$, there is some $i$ so that $a_i (q) \geq \frac{1}{N}$. 
	Because each of the homotopies $r_k$ changes the value of $a_i$ by no more than $\frac{1}{(N+1)^3}$, there is some $k \leq N$ so that $a_k (R^{k-1}(q,1)) > \frac{1}{N} - \frac{1}{(N+1)^2} > \frac{1}{N+1}$ and hence $R^k (q,1) \in X-V$. 
	In other words, every $q \in X-Y$ enters the compact subset $X-V$ at some point in the construction of $R^N$.
	The homotopy will continue to deform the subset $X-V$, but its image must remain compact.
	
	It follows that $R^N$ deforms $X-Y$ into a compact subset, and its complement is the desired $U$.
\end{proof}

To provide for interpolation between homotopies, it is necessary that the domain be normal, so that Urysohn functions may be constructed.
In fact, it is necessary that certain non-compact subspaces of the domain be normal as well, so the domain will be assumed to be metrizable.

\subsection{Proof of the Main Theorem}

\begin{mainthm}
	Let $\mathcal{O}$ be a finite collection of $G$--orbit-types with dimension function $d$ and let $Z$ be a compact metrizable tame $(\ooo,d)$--space. Then the set of strong equivalence classes of $G$--spaces over $Z$ is in one-to-one correspondence with the set of connected components of $\cozb$.
\end{mainthm}

\begin{proof}

	Let $f \in \cozb$. The aim is to show that there is some $\delta > 0$ so that if $g \in \cozb$ is $\delta$--close to $f$ it is $\ooo$--homotopic to $f$.
	
	The proof proceeds in two steps. The first step is to construct local homotopies near each part of $Z$. The second step is to patch these homotopies together.
	
	\clearpage
	\textbf{Step I}
	
	This part of the proof will show that given any $\epsilon > 0$ there are an open neighborhood $N_{(K)}$ of $Z_{(K)}$ and some $\delta_{(K)} > 0$ such that if $g \in \cozb$ is $\delta_{(K)}$--close to $f$ then $\left. p_{(K)} \circ f \right|_{N_{(K)}}$ is $\epsilon$--homotopic to $\left. p_{(K)} \circ g \right|_{N_{(K)}}$. 
	
	Let $A$ be $\cup \left\lbrace Z_{(H)} : (H) > (K) \right\rbrace $. 	By tameness, there are a neighborhood $V$ of $A$ and some stratum-preserving deformation $h \co (Z-A) \times [0,1] \to Z-A$ so that $f \circ h$ is an $\frac{\epsilon}{4}$--homotopy and $h_1 (Z-A) \subset Z-V$.
	
	The image $p_{(K)} \circ f( (Z-V) \cap Z_{(K)})$ is contained in a compact subset of $B_{(K)}$.
	It follows that there is an open neighborhood of $Z_{(K)}$ in $Z-V$, the image of which is also contained in a compact subset of $B_{(K)}$.
	This open neighborhood contains $h_1 (Z_{(K)})$, and so it also contains $h_1 (N)$, where $N$ is some open neighborhood of $Z_{(K)}$.
	
	Since $T_{(K)}$ is an open neighborhood of $B_{(K)}$, its preimage under any continuous map is an open neighborhood of $Z_{(K)}$. 
	Its preimage under $f \circ h$ is an open neighborhood of $Z_{(K)} \times [0,1]$. 
	By compactness of $[0,1]$, there is then an open neighborhood $N_f$ of $Z_{(K)}$ so that $f \circ h_t (N_f) \subset T_{(K)}$ for all $t$.
	Similarly there is a neighborhood $N_g$.
	Choose $N_{(K)} = N \cap N_f \cap N_g$.
	
	By using that $B_{(K)}$ is an ANR, that $p_{(K)} \circ f \circ h_1 (N_{(K)})$ is contained in a compact subset of $B_{(K)}$, and that $p_{(K)}$ is a short map, there is some $\delta_{(K)}$ so that if $f$ and $g$ are $\delta_{(K)}$--close there is an $\frac{\epsilon}{4}$--homotopy between $p_{(K)} \circ f \circ h_1$ and $p_{(K)} \circ g \circ h_1$.
	The homotopy $p_{(K)} \circ f \circ h$ is also an $\frac{\epsilon}{4}$ homotopy. Choosing $\delta_{(K)} < \frac{\epsilon}{4}$ in addition, $p_{(K)} \circ g \circ h$ will be an $\frac{\epsilon}{2}$ homotopy. Concatenating the three homotopies, one obtains the desired $\epsilon$--homotopy.
	
	\textbf{Step II}
	
	This step of the proof is inductive. For any minimal $(K) \in \ooo$, Step I shows that for any $\epsilon>0$, $\delta$ may be chosen small enough to given an $\epsilon$--homotopy between $f$ and $g$ from $Z_{(K)} \times [0,1] \to B_{(K)}$.
	
	The inductive step is to show the following. Let $C = \left\lbrace Z_{(H)} : (H) \leq (K) \right\rbrace $. Suppose that for any $\epsilon' > 0$, $\delta$ may be chosen small enough so that if $g$ is $\delta$--close to $f$ a stratum-preserving $\epsilon'$--homotopy $L \co (C - Z_{(K)}) \times [0,1] \to \bod$ exists. Then for any $\epsilon > 0$, $\delta$ may be chosen small enough so that a stratum-preserving $\epsilon$--homotopy exists on $C$.
		
	By Step I, for any $\epsilon'' > 0$ there are $\delta > 0$ and an open neighborhood $W_{(K)}$ of $Z_{(K)}$ in $C$ (chosen with respect to some tubular neighborhood $T_{(K)}$ of $B_{(K)}$) so that if $g$ is $\delta$--close to $f$ there is an $\epsilon''$--homotopy $M \co W_{(K)} \times [0,1] \to B_{(K)}$ from $p_{(K)} \circ f$ to $p_{(K)} \circ g$.
	
	On $W_{(K)} - Z_{(K)}$, the homotopy $L$ gives rise to an $\epsilon'$--homotopy $p_{(K)} \circ L \co (W_{(K)} - Z_{(K)}) \times [0,1] \to B_{(K)}$ between the maps.
	
	Using similar methods to those described in Step I, deforming away from the set $A = \cup \left\lbrace Z_{(H)} : (H) > (K) \right\rbrace$ and picking $\epsilon'$ and $\epsilon''$ small enough (in any event, $< \frac{\epsilon}{9}$) and shrinking the neighborhood $W_{(K)}$, the two homotopies $p_{(K)} \circ L$ and $M$ can be viewed as close maps $(W_{(K)} - Z_{(K)}) \times [0,1] \to B_{(K)}$, which may be homotoped to one another by an $\frac{\epsilon}{9}$--homotopy $J \co (W_{(K)} - Z_{(K)}) \times [0,1] \times [0,1] \to B_{(K)}$. 
	Since $W_{(K)} - Z_{(K)}$ is normal, this homotopy may be assumed to be constant on $(W_{(K)} - Z_{(K)}) \times \left\lbrace 0,1 \right\rbrace  \times [0,1] \to B_{(K)}$, so that $J$ is a deformation through $\epsilon/3$--homotopies from $p_{(K)} \circ f$ to $p_{(K)} \circ g$.
	
	It is possible to lift this through $p_{(K)}$ to a stratum-preserving deformation $\tilde{J}$ of stratum-preserving homotopies from $f$ to $g$. 
	This may be achieved with the usual stratified homotopy lifting property, making use of an automorphism of $[0,1] \times [0,1]$ which brings three sides of the square to one side. 
	The lift is a deformation of $L$ on $W_{(K)}$ so that, while $\tilde{J}_0=L$ does not extend over $Z_{(K)}$, it is easy to ensure that $\tilde{J}_1$ does extend.
	
	To guarantee that $\tilde{J}_1$ extends over $Z_{(K)}$, it is enough to choose $\tilde{J}$ so that the homotopy
	$$\dist (B_{(K)} , \tilde{J}(z,t,1)) \co (W_{(K)} - Z_{(K)}) \times [0,1] \to \rrr$$
	is linear.
	However, the distance from $B_{(K)}$ can be altered
	in any desired way without affecting the fact that $\tilde{J}$ is a deformation of homotopies from $f$ to $g$ with $p_{(K)} \circ \tilde{J} = J$, so long as the image of $\tilde{J}$ does not leave $T_{(K)}$.
	
	If $T_{(K)}$ is such that the diameter of fibers of $p_{(K)}$ is bounded above by $\epsilon/3$, then the homotopies $L_t$ are all $\epsilon$--homotopies.
	
	Since $C$ is normal, it is possible to interpolate between $L_0$ and $L_1$ using an Urysohn function.
	Let $W'_{(K)} \subset W_{(K)}$ be such that the closure of $W'_{(K)}$ inside $A$ is contained within $W_{(K)}$. 
	Let $\phi \co B \to [0,1]$ vanish outside $W_{(K)}$ and be $1$ on $W'_{(K)}$.
	The required $\epsilon$--homotopy is then $\tilde{J}(z,t,\phi(z))$.
\end{proof}

\section{Examples II}

As an application, this corollary shows that when a sequence of invariant subspaces of an ambient $G$--space $X$ have orbit spaces which are ``close" in some sense to a fixed orbit space inside $X/G$, then the $G$--spaces in the tail of the sequence are pairwise equivalent.

\begin{corollary}[Covering Sequence Theorem]\label{c:closeorbitspaces}Let $X$ be a $G$--space having finitely many orbit-types, and let $Y = X/G$ be its orbit space. Let $\ooo$ be the set of orbit-types arising in $Y$, and let $d$ be a dimension function giving the dimension of each orbit-type in $Y$. Let $Z$ be a compact metrizable tame $(\ooo,d)$--space. Let $f_n \co  Z \to Y$ be a sequence of $\ooo$--maps, each of which is an $\ooo$--equivalence onto its image in $Y$. Suppose that $f = \lim_{n \to \infty} f_n$ exists, and is also an $\ooo$--equivalence onto its image. Then, for large enough $n$, the $G$--spaces induced over $Z$ by $f_n$ are strongly equivalent to that induced over $Z$ by $f$.
\end{corollary}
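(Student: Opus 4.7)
The plan is to reduce everything to maps into the classifying space $\bod$, where the Main Theorem applies, and then use a uniform-continuity argument. Because $Y = X/G$ is itself an $(\ooo,d)$--space, Palais' original theorem provides an $\ooo$--map $\psi\co Y \to \bod$ whose strong $\ooo$--homotopy class classifies $X$ as a $G$--space over $Y$; equivalently, $X$ is strongly equivalent over $Y$ to the pullback $\psi^{*}\uod$. By naturality of pullback (functoriality of the classifying correspondence under composition of $\ooo$--maps), the $G$--space $f_n^{*}X$ is strongly equivalent to $(\psi \circ f_n)^{*}\uod$ over $Z$, and similarly $f^{*}X$ is strongly equivalent to $(\psi \circ f)^{*}\uod$. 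The composites $\psi \circ f_n$ and $\psi \circ f$ are $\ooo$--maps from $Z$ into $\bod$, so they are elements of $\cozb$.

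By hypothesis, $Z$ is a compact metrizable tame $(\ooo,d)$--space, so the Main Theorem applies: strong equivalence classes of $G$--spaces over $Z$ are in bijection with the connected components of $\cozb$, which (by the main theorem's conclusion that the function space is locally path-connected) coincide with its path components. Hence it suffices to show that, for $n$ large, $\psi \circ f_n$ and $\psi \circ f$ lie within a common path-connected neighborhood in $\cozb$. In particular, invoking the $\delta$--closeness criterion produced in the proof of the Main Theorem, it is enough to show that $\psi \circ f_n \to \psi \circ f$ in the compact-open topology on $\cozb$; since $Z$ is compact this is simply uniform convergence.

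Uniform convergence is then a standard compactness argument. The set $f(Z)$ is compact in $Y$, and $Y$ is locally compact, so we may pick a compact neighborhood $K \subset Y$ of $f(Z)$. By uniform convergence $f_n \to f$ on $Z$, we have $f_n(Z) \subset K$ for all large $n$. The restriction $\psi|_K$ is uniformly continuous, so combining this with uniform convergence $f_n \to f$ yields uniform convergence $\psi \circ f_n \to \psi \circ f$ on $Z$, completing the proof.

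The substantive content is entirely in the Main Theorem; the corollary is a formal consequence. The only conceptual point to be careful about is that the tameness hypothesis is required on $Z$ but not on $Y$ — so one must resist the temptation to apply the Main Theorem to $Y$, and instead use only Palais' original (unrefined) classification to produce $\psi$. The mild technical check is that the Palais classification is natural under pullback by $\ooo$--maps, so that $f_n^{*}\psi^{*}\uod$ is strongly equivalent to $(\psi \circ f_n)^{*}\uod$; this is immediate from the universal property of pullback.
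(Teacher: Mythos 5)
Your proposal is correct and follows essentially the same route as the paper: classify $X$ over $Y$ by a map into $\bod$, pull back along $f_n$ and $f$ to get a convergent sequence in $\cozb$, and apply the Main Theorem. The extra details you supply (naturality of the pullback and the compactness argument showing $\psi \circ f_n \to \psi \circ f$ uniformly) are exactly the steps the paper's terser proof leaves implicit.
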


\begin{proof}
	Let $F\co Y \to \bod$ classify the $G$--space $X$. Then $F$ induces $\ooo$--maps $\phi_n \co  Z \to \bod$ via $f_n$ and $\phi \co  Z \to \bod$ via $f$, and these maps classify the induced $G$--spaces over $Z$. Furthermore, since $f_n \to f$ the sequence $\phi_n \to \phi$, and so by the Main Theorem the maps $\phi_n$ are $\ooo$--homotopic to $\phi$ for large enough values of $n$.
\end{proof}

\begin{example}
	Let $Z$ be a compact metrizable tame $\od$--space with finitely many orbit-types, and let $A = \{ \frac{1}{n} :  n \in \nnn \} \cup \{ 0 \}$ topologized as a subspace of $\rrr$. Then if $X$ is a $G$--space having orbit space homeomorphic to $Z \times A$, with $\pi\co  X \to Z \times A$ the projection map (identifying the orbit space with $Z \times A$), the $G$--spaces $\pi^{-1} (Z \times \{\frac{1}{n}\})$ are strongly equivalent to $\pi^{-1} (Z \times \{ 0 \})$ for large enough $n$.
\end{example}

Finally, the following examples show the necessity of an additional tameness assumption on $Z$. In Example \ref{ctrex} the orbit space $Z$ is topologically straightforward, but badly partitioned. In Example \ref{wedge} the partition is very straightforward, but the infinite topology of the orbit space violates reverse tameness.

\begin{example}\label{ctrex}
	Let $G=S^1$ and let $\ooo = \{ S^1, 1 \}$.  Let $Z$ be $[0,1] \times S^2$, and let $A = \{ \frac{1}{n} : n \in \nnn \} \cup \{ 0 \}$. Let $B_n = [\frac{1}{n+1}, \frac{1}{n} ] \times S^2$ for each $n \geq 1$. Partition $Z$ so that $Z_{S^1} = A \times S^2$ while the complement represents points of trivial isotropy. (See Figure \ref{fig:orbit}.)
	
	\begin{figure}[h]
		\centering
		\begin{tikzpicture}[scale=5]
		\draw (0.02,0) -- (0.02,1) -- (1,1) -- (1,0) -- (0.02,0);
		\foreach \n in {2,3,...,50} {
			\draw ({(1/\n)},0) -- ({(1/\n)},1);
		}
		\node at (0.4,0.5) {$B_2$};
		\node at (0.8,0.5) {$B_1$};
		\end{tikzpicture}
		\caption{The orbit space $Z$ from Example \ref{ctrex}.}\label{fig:orbit}
	\end{figure}
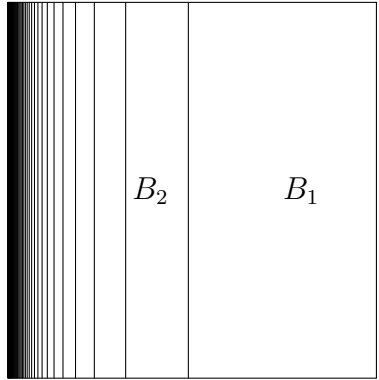

	A classifying space for $G$--spaces over $Z$ is the closed cone $K \ccc P^2$, arising as the quotient of $\uod \cong D^6$ by an action fixing an interior point and restricting to the Hopf action on the boundary. The $G$--space is classified by the maps  $B_n \to K \ccc P^2$ which take $\{\frac{1}{n+1}, \frac{1}{n} \} \times S^2$, and no other point of $B_n$, onto the fixed point . As noted in Example \ref{e:product}, a choice of such a map corresponds to choosing an element from $\pi_2 (\ccc P^2)$.
	
	Fix a metric on $K \ccc P^2$. Let $\phi_0 \co  Z \to K \ccc P^2$ be an $\ooo$--map such that, for each $n$, the restriction $\phi_0 | _{B_n}$ is not trivial in $\pi_2(\ccc P^2)$ and has image contained within a ball of radius $\frac{1}{n}$ centered on the fixed point. Inductively define a sequence of maps $\phi_n$ as follows. Assume that the map $\phi_{n-1}$ is defined, and construct $\phi_n$ from $\phi_{n-1}$ by redefining it on $B_n$ so that it becomes trivial in $\pi_2(S^2)$, but still has image contained within a ball of radius $\frac{1}{n}$ centered on the fixed point. The sequence $\phi_n$ has a limit $\phi\co  Z \to K \ccc P^2$, but $\phi$ is homotopically trivial on every $B_n$ and so is not homotopic to any of the maps in the sequence.
\end{example}

\begin{example}\label{wedge}
	Once more, let $G=S^1$ and let $\ooo = \{ S^1, 1 \}$.  
	
	Let $Z$ be $[0,1] \cup_{i=1}^{\infty} S^2$, so that the $i^{\textrm{th}}$ sphere is wedged to the interval at $\frac{1}{i}$. Let $Z_{S^1}$ be the point $\left\lbrace 0 \right\rbrace $ and let the complement represent points of trivial isotropy.
	
	Now the map from each sphere may be chosen to represent any desired homotopy class in $\pi_2 (\ccc P^2)$. As $i \to \infty$, the image of the $i^{\textrm{th}}$ sphere can be chosen to lie in ever-smaller balls centered at the vertex, and a similar phenomenon to that described in Example \ref{ctrex} arises.
\end{example}

\bibliographystyle{amsabbrv}
\bibliography{C:/Users/John/mybib}

\end{document}